\newtheorem{thm}{Theorem}[section]
\newtheorem{prop}[thm]{Proposition}
\theoremstyle{definition}
\newtheorem*{rem}{Remark}
\newcommand{\A}{\mathcal{A}}        
\newcommand{\C}{\mathbb{C}}         
\newcommand{\co}[2]{#1_{(#2)}}      
\newcommand{\cop}{\Delta}           
\newcommand{\dn}{{\mathord{\downarrow}}} 
\newcommand{\eps}{\varepsilon}      
\renewcommand{\H}{\mathcal{H}}      
\newcommand{\half}{{\mathchoice{\oh}{\oh}{\shalf}{\shalf}}} 
\DeclareMathOperator{\id}{id}       
\newcommand{\ket}[1]{|#1\rangle}    
\newcommand{\bra}[1]{\langle#1|}    
\newcommand{\lt}{\triangleright}    
\newcommand{\N}{\mathbb{N}}         
\newcommand{\oh}{{\tfrac{1}{2}}}    
\newcommand{\ooh}{{\tfrac{3}{2}}}   
\newcommand{\ox}{\otimes}           
\newcommand{\R}{\mathbb{R}}         
\newcommand{\rt}{\triangleleft}     
\newcommand{\set}[1]{\{\,#1\,\}}     
\newcommand{\Sf}{\mathbb{S}}        
\newcommand{\sg}{\sigma}            
\newcommand{\shalf}{{\scriptstyle\frac{1}{2}}} 
\newcommand{\ssesq}{{\scriptstyle\frac{3}{2}}} 
\newcommand{\sesq}{{\mathchoice{\ooh}{\ooh}{\ssesq}{\ssesq}}} 
\newcommand{\U}{\mathcal{U}}        
\newcommand{\up}{{\mathord{\uparrow}}} 
\newcommand{\Z}{\mathbb{Z}}         
\def\<#1,#2>{\langle#1,#2\rangle}   
\newcommand{\SU}{\A(\mathrm{SU}_q(2))}  
\newcommand{\podl}{\A(\mathrm{S}^2_{qc})}  
\newcommand{\pinf}{\A(\mathrm{S}^2_{q\infty})}
\newcommand{\su}{\U_q(\mathrm{su}(2))}  
\newcommand{\lin}{{\rm span}}  
\newcommand{\im}{\mathrm{i}}
\newcommand{\tT}{{\tilde T}}
\newcommand{\tW}{{\tilde W}}
\newcommand{\tH}{{\tilde\H}}
\newcommand{\tpi}{{\tilde \pi}}
\newcommand{\tJ}{{\tilde J}}
\newcommand{\tD}{{\tilde D}}
 \newcommand{\hs}{\hspace{1pt}}
 \newcommand{\hsp}{\hspace{-1pt}}
\title{Restricting the bi-equivariant spectral triple\\ on quantum SU(2) 
to the Podle\'s spheres}
\author{
{\sc Elmar Wagner
} \\
\normalsize
Instituto de F\'isica y Matem\'aticas\\
\normalsize
Universidad Michoacana de San Nicol\'as de Hidalgo, Morelia, M\'exico\\
\normalsize
e-mail: {\it elmar@ifm.umich.mx}}
\date{}      
\begin{document}
\maketitle

\begin{abstract}
It is shown that the isospectral bi-equivariant spectral triple on quantum SU(2)
and the isospectral equivariant spectral triples on the Podle\'s spheres are related
by restriction. In this approach, the equatorial Podle\'s sphere is distinguished
because only in this case the restricted spectral triple admits an equivariant
grading operator together with a real structure (up to
infinitesimals of arbitrary high order). The real structure is expressed by the
Tomita operator on quantum SU(2) and it is shown that the failure of the real
structure to satisfy the commutant property is related to the failure of the
universal R-matrix operator to be unitary.  
\end{abstract}

\section{Introduction}                                 \label{sec:intro}

The search for spectral triples on noncommutative spaces arising in quantum group theory is an 
active research topic. A typical strategy for finding (equivariant) spectral triples on $q$-deformed spaces 
is a case by case study starting with a quantum analogue of the classical spinor bundle and defining 
the Dirac operator on $q$-analogues of harmonic spinors 
(see, e.g., \cite{DDLSphere,DDLCPn,DDLW,DabrowskiLPS,DabrowskiLSSV,DS}). 
Until now, only few general methods for the construction of spectral triples were found. 
The most notable examples are the construction of Dirac operators on quantum flag manifolds by 
Kr\"ahmer \cite{K} and the construction of equivariant spectral triples on compact quantum groups by 
Neshveyev and Tuset \cite{NT}. Therefore the question arises whether the latter construction 
on compact quantum groups can be used 
to find spectral triples on the associated quantum homogeneous spaces.

We approach this question by studying 
the relation between the 
bi-equivariant Dirac operator on quantum SU(2) \cite{DabrowskiLSSV} 
and spectral triples on the 1-parameter 
family of Podle\'s spheres  $\podl$, $c\in [0,\infty]$ \cite{DDLW}. 
This example exhibits already some interesting features. 
Whereas the standard Podle\'s sphere $\A(\mathrm{S}^2_{q0})$ is 
distinguished for being obtained by a quotient of quantum groups  
and admitting a rich non-commutative spin geometry \cite{W}, 
it is the equatorial Podle\'s sphere $\A(\mathrm{S}^2_{q\infty})$ 
on the other extreme which distinguishes in the present approach. 
The restriction of the bi-equivariant Dirac operator on quantum SU(2)  
to the  Podle\'s spheres $\podl$ does yield a spectral triple  
for all $c\in [0,\infty]$, but only in the case $c=\infty$ the obtained spectral triple admits 
an equivariant grading operator. 

Having an equivariant even spectral triple 
on $\A(\mathrm{S}^2_{q\infty})$, one can ask for an equivariant real structure. 
Again, our aim is to relate the real structure on $\A(\mathrm{S}^2_{q\infty})$ with the one coming from the 
spectral triple on quantum SU(2). Moreover, and maybe more interesting, we want to implement 
the real structure by the Tomita operator on $\SU$. 
It is known that an equivariant real structure for the bi-equivariant spectral triple on quantum SU(2) 
cannot satisfy the commutant and first order property exactly 
but does so up to compacts of arbitrary high order \cite{DabrowskiLSSV}. 
Starting from the Tomita operator on $\SU$, we will construct an equivariant operator on the 
quantum spinor bundle of $\A(\mathrm{S}^2_{q\infty})$ which satisfies the commutant property. 
This operator is not unitary but its unitary part coincides with restriction of the 
equivariant real structure on quantum SU(2). The construction uses the R-matrix operator of 
$\U_q(\mathrm{sl}(2))$ for intertwining tensor product representations. 
It is argued that the failure of this intertwining operator to be unitary is responsible for  
the failure of real structure to satisfy the commutant property. 

\section{Preliminaries}   
\subsection{Algebraic Preliminaries}                   \label{sec:alg-defns}

Throughout this paper, $q$ stands for real number such that $0<q<1$, and we set 
$[x] = [x]_q := \frac{q^x - q^{-x}}{q - q^{-1}}$ for $x\in \R$.
All algebras appearing in this paper will be complex and unital.
We shall use Sweedlers notation for the coproduct, namely,
$\cop x =: \co{x}{1} \ox \co{x}{2}$.

The Hopf $*$-algebra $\su$ is generated by
$e$, $f$, $k$, $k^{-1}$
with defining relations
\begin{equation*}                               
kk^{-1}=k^{-1}k=1,\quad
ek = qke,  \quad  kf = qfk, \quad
fe - ef = (q - q^{-1})^{-1}(k^2 - k^{-2}),
\end{equation*}
coproduct
$\cop k = k \ox k$,\ \,$\cop e = e \ox k + k^{-1} \ox e$,\ \,$\cop f = f \ox k + k^{-1} \ox f$,
counit $\epsilon(k)=1$, $\epsilon(f)=\epsilon(e)=0$, antipode
$S(k) = k^{-1}$, $S(f)= -q f$, $S(e)= -q^{-1} e$, and  involution $k^* = k$ and $f^* = e$. 

The coordinate Hopf $*$-algebra $\SU$ of the quantum $\mathrm{SU}(2)$
group has two generators $a$ and $b$ satisfying the relations
\begin{gather*}
ba = q ab,  \quad  b^*a = qab^*, \quad bb^* = b^*b, \quad
a^*a + q^2 b^*b = 1,  \qquad  aa^* + bb^* = 1.    
\end{gather*}
The the counit $\eps$, coproduct $\cop$
and the antipode $S$ are determined by
\begin{align*}
&\cop a = a \ox a - q\,b \ox b^*, \ \ 
\cop b = b \ox a^* + a \ox b,\quad \eps(a) = 1,\ \ \eps(b) = 0, \\
&S(a) = a^*, \ \   S(b) = - qb, \ \ 
S(b^*) = - q^{-1}b^*,\ \ S(a^*) = a.
\end{align*}
There is a dual pairing between the Hopf $*$-algebras $\,\su$ and
$\SU$ given on generators by
$$
\<k^{\pm 1}, a> = q^{\pm\half}, \quad
\<k^{\pm 1}, a^*> = q^{\mp\half}, \quad
\<f, b> = \<e, -qb^*> = 1,
$$
and zero otherwise. The left action defined by 
$h \lt x := \co{x}{1}\<h, \co{x}{2}>$ for $h\in \su$ and $x\in \SU$
satisfies
\begin{equation}                                      \label{eq:mod-alg}
h \lt (xy) =(\co{h}{1}\lt x)(\co{h}{2}\lt y),\quad h \lt 1=\epsilon(h),
\quad (h \lt x)^* = S(h)^* \lt x^*,
\end{equation}
i.e., $\SU$ is a left $\su$-module $*$-algebra. Similarly,
$x \rt h:= \<h, \co{x}{1}> \,\co{x}{2}$ 
defines a right $\su$-action on $\SU$ such that
\begin{equation}                                      \label{rmodul}
 (xy)\rt h =(x\rt \co{h}{1})(y\rt \co{h}{2}),\quad 1\rt h =\epsilon(h),
\quad (x\rt h)^* =x^*\rt S(h)^*.
\end{equation}

We follow \cite{Podles} and
define the Podle\'s quantum sphere $\podl$, $c\in [0,\infty]$,
as the $*$-algebra generated by $A=A^*$ and $B$ with relations
\begin{align*}
&BA=q^2A, \quad B^*B=A-A^2+c,\quad BB^*=q^2A-q^4A^2+c\quad
\mbox{for}~c<\infty,\\
&BA=q^2A, \quad B^*B=-A^2+1,\quad BB^*=-q^4A^2+1\quad \mbox{for}~c=\infty.
\end{align*}
The Podle\'s quantum sphere $\podl$ can be viewed as a $*$-subalgebra
of $\SU$ by setting
\begin{align*}
&B=c^{1/2}a^{*2}+a^*b-qc^{1/2}b^{2}, \quad
A=c^{1/2}b^*a^*+bb^*+c^{1/2}ab \quad \mbox{for}~c<\infty,\\
&B=a^{*2}-qb^{2}, \quad
A=b^*a^*+ab \quad \mbox{for}~c=\infty.
\end{align*}
Then the left $\su$-action on $\SU$ turns $\podl$ into a 
left $\su$-module $*$-algebra such that the elements
\begin{align*}
&x_{-1}:=q^{-1}(1+q^2)^{1/2}B,\quad
x_0:= 1-(1+q^2)A,\quad
x_{1}:=-(1+q^2)^{1/2}B^*\quad \mbox{for}~c<\infty,\\
&x_{-1}:=q^{-1}(1+q^2)^{1/2}B,\quad
x_0:= -(1+q^2)A,\quad
x_{1}:=-(1+q^2)^{1/2}B^*\quad \mbox{for}~c=\infty
\end{align*}
transform by a spin 1 representation (see Equation \eqref{eq:uqsu2-repns}).

\subsection{Equivariant representations}
                                                   \label{sec:eqvt-repns}

Let $\H$ be a Hilbert space with inner product
$\langle \cdot,\cdot\rangle$,
$V$ a dense linear subspace, and $\A$ a $*$-al\-ge\-bra.
By a $*$-representation of $\A$ on $V$,
we mean a homomorphism $\pi: \A\rightarrow\rm{End}(V)$
such that $\langle \pi(a)v,w\rangle=\langle v,\pi(a^*)w\rangle$ for all
$v,w\in V$ and $a\in\A$.


Now assume that $\A$ is a left $\U$-module
$*$-algebra, i.e., there is a left action $\lt$ of a Hopf
$*$-algebra $\U$ on $\A$ satisfying \eqref{eq:mod-alg}.
A $*$-representation $\pi$ of $\A$ on $V$ is called (left)
$\U$-equivariant if there exists a $*$-re\-pre\-sen\-ta\-tion
$\lambda$ of $\U$
on $V$ such that
\begin{equation*}                              
\lambda(h)\,\pi(x)\xi = \pi(\co{h}{1} \lt x)\,\lambda(\co{h}{2}) \xi
\end{equation*}
for all $h \in \U$, $x \in \A$ and $\xi \in V$.
We call an operator defined on $V$ equivariant if
it commutes on $V$ with $\lambda(h)$ for all $h\in\U$.  An antilinear
operator $T$ is called equivariant  if its domain of definition contains
$V$ and if it satisfies on $V$  the relation
$T\lambda(h)=\lambda(S(h)^*)T$ for all $h\in\U$.  We say that an antiunitary
operator is equivariant if it leaves $V$ invariant and if
it is the antiunitary part of the polar
decomposition of an equivariant antilinear (closed) operator.

Given $\U$ and $\A$ as above,
the left crossed product $*$-algebra $\A\rtimes \U$ is defined as the
$*$-algebra generated by the two $*$-subalgebras $\A$ and $\U$ with
cross commutation relations
\begin{equation*}
    hx=(\co{h}{1} \lt x)\co{h}{2},\quad h \in \U,\ x \in \A.
\end{equation*}
Thus $\U$-equivariant representations of $\A$ correspond to
$*$-representations of $\A\rtimes \U$. As
Hilbert space representations of $\SU\rtimes\su$ and $\podl\rtimes\su$
have been studied extensively in \cite{SchmuedgenWCPA} and
\cite{SchmuedgenWCPAPod}, we shall mainly consider equivariant
representations from this point of view.

Above definitions have their right handed counter parts.
For instance, a $*$-re\-pre\-sen\-ta\-tion $\pi$ of a right $\U$-module
$*$-algebra $\A$ (i.e.~\eqref{rmodul} is satisfied)
 is called (right)
$\U$-equivariant if there exists a $*$-re\-pre\-sen\-ta\-tion
$\rho$ of $\U$
on $V$ such that
\begin{equation}                             \label{r-covar-repn}
\pi(x)\hs \rho(h)\hs\xi =\rho(\co{h}{1})\hs \pi(x\rt \co{h}{2}) )\hs \xi,
\quad h \in \U,\,\ x \in \A,\, \ \xi \in V.
\end{equation}
Assume that we are given a left and right $\U$-equivariant representation
$\pi$ of $\A$ on $V$ such that  $\lambda(h)\rho(g)=\rho(g)\lambda(h)$
for all $h,g\in\U$.
Then we  say that an operator $X$ on $V$ is bi-equivariant if it
commutes with all operators $\lambda(h)$ and $\rho(h)$, $h\in\U$.

The irreducible $*$-representations
of $\su$ are labeled by non-negative half-in\-te\-gers.
For $l\in \half\N_0$, the corresponding representation $\sg_l$
acts on a $2l+1$-dimensional
Hilbert space $V_l$ with orthonormal basis
$\set{\ket{lm} : m = -l, -l+1,\dots, l}$ by the formulas
\begin{align} 
\begin{split}                                 
\sg_l(k)\,\ket{lm} &= q^m \,\ket{lm},\quad 
\sg_l(f)\,\ket{lm} = \sqrt{[l-m][l+m+1]} \,\ket{l,m+1}, \label{eq:uqsu2-repns} \\ 
\sg_l(e)\,\ket{lm} &= \sqrt{[l-m+1][l+m]} \,\ket{l,m-1}.
\end{split}
\end{align}
A $*$-representation of $\SU\rtimes\su$ or $\podl\rtimes\su$ 
is called integrable if its
restriction to $\su$ is a direct sum of spin $l$ representations $\sg_l$.

Suppose that $\pi$ is a $*$-representation of $\SU\rtimes\su$
(or $\podl\rtimes\su$) on $V$. Then the tensor product
representation $\pi\otimes\sg_l$ on $V\otimes V_l$ is defined by
setting $\pi\otimes\sg_l(h):=\pi(\co{h}{1})\otimes\sg_l(\co{h}{2})$
for $h\in\su$ and $\pi\otimes\sg_l(x):=\pi(x)\otimes\sg_l(1)$
for $x\in\SU$ (or  $x\in\podl$). Straightforward computations
show that $\pi\otimes\sg_l$ yields indeed a $*$-representation of
the quoted crossed product $*$-algebras.

\subsubsection{Integrable representations of $\SU\rtimes\su$}
                                            \label{sec:eqvt-repns-SU(2)}

Let $\psi$ denote the Haar state of $\SU$.
From the GNS representation of $\SU$ associated to $\psi$, we
derive a unique integrable $*$-representation $\pi_\psi$ of $\SU\rtimes\su$,
called the Heisenberg representation \cite{SchmuedgenWCPA}. It is obtained as follows.
Since $\psi$ is faithful,
we can equip $\SU$ with the inner product $\langle x,y\rangle:=\psi(y^*x)$.
The representation is given by the 
formulas $\pi_\psi(h)x=h\lt x$ and $\pi_\psi(y)x=yx$,
where $x,y\in\SU$ and $h\in\su$. 
Recall that $\SU$ has a vector-space basis
$\set{t^l_{mn} : 2l \in \N,\ m,n = -l,-l+1,\dots,l}$ 
consisting of matrix elements of its finite dimensional irreducible 
corepresentations \cite{KlimykS}. 
The  normalized matrix elements
\begin{equation}                                    \label{eq:matelt-onb}
\ket{lmn} := q^n \,[2l+1]^\half \,t^l_{nm},\quad
l\in\half\N_0,\ \, m,n=-l,-l+1,\dots,l,
\end{equation}
form an orthonormal basis for $\SU$.
On
\begin{equation}                                 \label{eq:V_ln}
V_{ln}:=\lin\set{\ket{lmn} : m=-l,-l+1,\dots,l },
\end{equation}
the restriction of $\pi_\psi$ to $\su$ becomes a spin $l$ representation,
so $\pi_\psi$ is integrable.

It follows from \cite[Proposition 1.2]{SchmuedgenWCPAPod} that
each integrable $*$-representation of the crossed product $*$-algebra
$\SU\rtimes\su$ is unitarily
equivalent to a direct sum of Heisenberg representations. Moreover, an
integrable $*$-representation of $\SU\rtimes\su$ is irreducible if and
only if the  vector space of invariant vectors (i.e., vectors belonging to a
spin 0 representations) is 1-dimensional. In particular, each irreducible
integrable $*$-representation of $\SU\rtimes\su$ is unitarily equivalent
to the Heisenberg representation.

Defining
\begin{equation}                                     \label{rho}
\rho_\psi(h)x=x\rt S^{-1}(h),\qquad x\in\SU,
\end{equation}
the left $\su$-equivariant representation $\pi_\psi$ can also be viewed
as right $\su$-equiva\-riant.
One easily shows (see, e.g.,~\cite{SchmuedgenWCPA}) that
$V_{lm}=\lin\{\ket{lmn}:n=-l,\ldots\hsp,l\}$ is an
irreducible spin $l$ representation space with highest weight
$\ket{lml}$, i.e., $\rho_\psi(k)\ket{lmn}=q^{-n}\ket{lmn}$.
Since left and right $\su$-action on $\SU$ commute, we have 
obviously 
$\pi_\psi(h)\rho_\psi(g)=\rho_\psi(g)\pi_\psi(h)$ for all
$g,h\in\su$.

\subsubsection{Integrable representations of $\podl\rtimes\su$}
                                           \label{sec:eqvt-repns-podl}

The integrable representations of 
$\podl\rtimes\su$ were completely
classified in \cite{SchmuedgenWCPAPod}. It turned out that each
integrable representation 
is a direct sum of
irreducible ones. The inequivalent irreducible integrable representation
$\pi_j$ of $\podl\rtimes\su$ are labeled by half-integers
$j\in\half\Z$. Each representation $\pi_j$ can be realized
on an invariant subspace $M_j\subset\SU$ by restricting the
Heisenberg representation $\pi_\psi$ of $\SU\rtimes\su$ to the
$*$-subalgebra $\podl\rtimes\su$. Moreover, $\SU$ is the orthogonal
direct sum of these  invariant subspaces, i.e.,
$\SU=\oplus_{j\in\half\Z}M_j$. As a left $\podl$-module, $M_j$
is finitely generated and projective. It is known that $M_j$
can be considered as a line bundle over the quantum sphere
$\mathrm{S}^2_{qc}$ with winding number $2j$
\cite{BM,H,MS}.

For the convenience of the reader, we recall from
\cite{SchmuedgenWCPAPod} the explicit description of the
irreducible
representations $\pi_j$, $j\in\half\Z$.
The Hilbert space
is the orthogonal direct sum $\bigoplus_{l=|j|,|j|+1,\ldots} V^l$,
where $V^l$ is a spin $l$-representation space with an orthonormal
basis of weight vectors  $\{ v^{l}_{k,j}:k=-l, -l+1, \ldots ,l\}$.
The generators $e$, $f$, $k$ of $\su$ act on
$V^{l}$ by (\ref{eq:uqsu2-repns}).
The actions of the generators $x_1$, $x_0$, $x_{-1}$ of
$\podl$ are
determined by
\begin{align}
\pi_j(x_1) v^l_{k,j} =\  &q^{-l+k} [l\!+\!k\!+\!1]^{1/2}
[l\!+\!k\!+\!2]^{1/2}
[2l\!+\!1]^{-1/2} [2l\!+\!2]^{-1/2} \alpha_j(l) v^{l+1}_{k+1,j}
\nonumber\\
        &\qquad -q^{k+2} [l\!-\!k]^{1/2} [l\!+\!k\!+\!1]^{1/2} [2]^{1/2} [2l]^{-1}
\beta_j (l) v^l_{k+1,j}                                      \label{x1}\\
           &\qquad\qquad -q^{l+k+1} [l\!-\!k\!-\!1]^{1/2} [l\!-\!k]^{1/2}
[2l\!-\!1]^{-1/2} [2l]^{-1/2} \alpha_j (l\!-\!1) v^{l-1}_{k+1,j},\nonumber
\end{align}
\begin{align}
\pi_j(x_0) v^l_{k,j} =\ &q^k [l\!-\!k\!+\!1]^{1/2} [l\!+\!k\!+\!1]^{1/2}
[2]^{1/2}
[2l\!+\!1]^{-1/2} [2l\!+\!2]^{-1/2} \alpha_j(l) v^{l+1}_{k,j}\nonumber\\
           &\qquad +\big(1-q^{l+k+1} [l\!-\!k][2][2l]^{-1}\big)
            \beta_j (l) v^l_{k,j}                            \label{x0}\\
           &\qquad\qquad +q^k [l\!-\!k]^{1/2} [l\!+\!k]^{1/2} [2]^{1/2}
[2l\!-\!1]^{-1/2} [2l]^{-1/2} \alpha_j (l\!-\!1) v^{l-1}_{k,j} \nonumber       
\end{align}
%
and $\pi_j(x_{-1})=-q^{-1}\pi_j(x_{-1})^*$. For $c<\infty$,
the real numbers $\beta_j(l)$ and $\alpha_j(l)$
are defined by
\begin{align*}                                           
\beta_j(l)
&= [2l\!+\!2]^{-1}\big([2|j|](q^{-2} \lambda_\pm - \lambda_\mp)
+(1-q^{-2})[|j|]\hspace{1pt}[|j|\!+\!1]
- (1-q^{-2})[l][l\!+\!1]\big), \\
\alpha_j(l)&=[2]^{-1/2}[2l\!+\!3]^{-1/2}[2l\!+\!2]^{1/2}
\big(1+[2]^2c-(1-q^{2}) \beta_j (l)
-q^2(\beta_j (l) )^2\big)^{1/2},            
\end{align*}
where $\lambda_\pm = 1/2 \pm (c+ 1/4)^{1/2}$. 
For $c=\infty$, $\beta_j(l)$ and $\alpha_j(l)$
are given by
\begin{equation*}                                      
\beta_j(l)
\hsp=\hsp \mathrm{sign}(j)\hs q^{-1} [2l\hsp+\hsp 2]^{-1}[2]\hs [2|j|], \ 
\alpha_j(l)\hsp=\hsp[2]^{-1/2}[2l\hsp+\hsp3]^{-1/2}[2l\hsp+\hsp2]^{1/2}
\big([2]^2-q^2(\beta_j (l) )^2\big)^{\hsp 1/2}\hsp.    
\end{equation*}
In the case $l=k=j=0$, Equation \eqref{x0} becomes 
$\pi_0(x_0) v^0_{0,0} =  \alpha_0(0)\hs v^{1}_{0,0}$. 

In the present paper, we are particularly interested in the
representation $\pi_0$ acting on the trivial line bundle
$M_0\cong\podl$. This representation can also be obtained from
the GNS representation associated to Haar state $\tilde\psi$
on $\podl$. By the uniqueness of the Haar state, one can take $\tilde\psi$ 
to be the restriction of $\psi$ on $\SU$ to $\podl$. 
Analogously to the Heisenberg representation
from the previous subsection, we have
$\langle x,y\rangle:=\tilde \psi(y^*x)$, $\pi_0(y)x=yx$, 
and  $\pi_0(h)x=h\lt x$,
where $x,y\in\podl$ and $h\in\su$.

\subsection{Spectral triples}
                                                   \label{sec:spec-tr}

By a (compact) spectral triple $(\A,\H,D)$, we mean a $*$-algebra $\A$,
a bounded $*$-representa\-tion $\pi$ of $\A$ on a Hilbert space $\H$, and a
self-adjoint operator $D$ on $\H$ such that \cite{CBook}
\begin{enumerate}
\item[$(i)$]
     $(D-\zeta)^{-1}$ is a compact operator for all $\zeta\in
\C\setminus\R$,
\item[$(ii)$]
     the commutators $[D,\pi(a)]$ are bounded for all $a\in\A$.
\end{enumerate}
If there exists an $n\in\N_0$ such that
the asymptotic behavior of the eigenvalues
$0\leq\mu_1\leq\mu_2\leq\dots$ of $|D|^{-n}$
is given by $\mu_k={\rm O}(k^{-1})$ as $k\rightarrow\infty$, then
the spectral triple is said to be $n^+$-summable.

Let $\U$ be a Hopf $*$-algebra, $\A$ a left and/or right
$\U$-module and
$\pi$ a $\U$-equivariant representation on $\H$.
The spectral triple $(\A,\H,D)$ is called  left or right $\U$-equivariant
if $D$ is a left or right equivariant operator.
We call it bi-equivariant if $D$ is left and right $\U$-equivariant.

An (equivariant) spectral triple $(\A,\H,D)$ is called even if
there exists   an (equivariant) grading operator $\gamma$ on $\H$ such that
$\gamma^*=\gamma$,
$\gamma^2=1$, $\gamma D=-D\gamma$, and $\gamma\pi(a)=\pi(a)\gamma$
for all $a\in\A$. 

In the seminal paper \cite{ConnesReal}, a real structure $J$ on a spectral
triple was defined by an antiunitary operator $J$ on $\H$ satisfying 
\begin{equation}                                     \label{eq:aJb=0}
[\pi(x), J\pi(y)J^{-1}]=0, \quad\
[[D,\pi(x)],J\pi(y)J^{-1}]=0, \quad\  x,y\in\A,
\end{equation}
$J^2=\pm 1$, $JD=\pm DJ$ and, for even spectral triples, 
$J\gamma =\pm \gamma J$. 
The real structure is called equivariant, if $J$ is equivariant in the sense of 
Section \ref{sec:eqvt-repns}. 

It was noted in \cite{DabrowskiLSSV} that, by requiring equivariance of
$J$, it is not possible to satisfy \eqref{eq:aJb=0}. However,
the problem was overcome in \cite{DabrowskiLSSV} by requiring that
\eqref{eq:aJb=0} holds up to an operator ideal contained
in the ideal of infinitesimals of arbitrary high order.
Here, a compact operator $A$ is called an infinitesimal of arbitrary high
order if its singular values $s_n(A)$ satisfy
$\lim_{n\to\infty}n^p s_n(A)=0$ for all $p>0$.

\section{Equivariant spectral triples}
                                              \label{sec:Dirac-oper}

\subsection{The equivariant Dirac operator on $\SU$}
                                              \label{sec:Dirac-oper-SU}

In this section, we summarize the results
from \cite{DabrowskiLSSV} concerning the equivariant isospectral
Dirac operator on  $\SU$.
Starting point of the construction 
is the Heisenberg
representation $\pi_\psi$ of the left crossed product $*$-algebra
$\SU\rtimes\su$ on $V:=\SU$.
The spin representation $\pi$ is given by the tensor product
representation $\pi:=\pi_\psi\otimes\sg_\half$ acting on
$W:=V\otimes V_\half$.
The Hilbert space completion of $W$ will be denoted by $\H$.
Setting $\rho\hsp :=\hsp\rho_\psi\otimes\id$, the left
$\su$-\hspace{0pt}equivariant representation $\pi$ becomes also right
$\su$-equivariant
and we have  $\pi(h)\rho(g)=\rho(g)\pi(h)$ for all $h,g\in\su$.

Recall that the set of  vectors defined in \eqref{eq:matelt-onb} forms
an orthonormal basis for $\SU$. Set
\begin{equation}                                \label{eq:Hl}
H_l:=\lin\set{\ket{lmn} : m,n =-l,-l+1,\dots,l }.
\end{equation}
Then,
by \eqref{eq:V_ln},
$H_l=\oplus_{n=-l}^l V_{ln}$ is the $2l+1$-fold orthogonal sum of
irreducible spin l representation spaces.
As before, let $V_l$, $l\in\half\N_0$,
denote the irreducible spin $l$ representation space.
From the Clebsch-Gordan decomposition, it is
known that
\begin{equation}                         \label{eq:CGDec}
 V_l\otimes V_\half = V_{l-\half}\oplus V_{l+\half},\ \ l=\half,1,\dots,
\qquad V_0\otimes V_\half =V_\half.
\end{equation}
Hence we can write
\begin{equation}                         \label{eq:Wl}
H_l\otimes V_\half=W_{l-\half}^\up \oplus W_{l+\half}^\dn,
\ \  l=\half,1,\dots,
\qquad H_0\otimes V_\half=W_\half^\dn,
\end{equation}
where $W_{l-\half}^\up$ and $ W_{l+\half}^\dn$ are the linear
spaces of vectors from $H_l\otimes V_\half$
belonging to spin $l-\half$ and  spin $l+\half$
representations, respectively.
Since $V=\oplus_{l\in\half\N_0}H_l$, it follows that the representation space
$W=V\otimes V_\half$ decomposes into
\begin{equation}                         \label{eq:W}
  W= \bigoplus_{l\in\half\N_0} W_l^\up\oplus\bigoplus_{l\in\half\N}W_l^\dn.
\end{equation}
By \eqref{eq:Hl}--\eqref{eq:Wl}, we have
$\dim W_l^\up= (2l + 1)(2l + 2)$ and
$\dim W_{l}^\dn= 2l(2l + 1)$.

Now consider the self-adjoint operator $D$ on $\H$ determined by
\begin{equation}                                            \label{eq:D}
D w_l^\up=(l+\half)w_l^\up,\ \, w_l^\up\in W_l^\up, \qquad
D w_{l}^\dn=-(l+\half)w_{l}^\dn,\ \,
w_{l}^\dn\in  W_{l}^\dn.
\end{equation}
It was proved in \cite{DabrowskiLSSV} that
$(\SU,\H,D)$ is a bi-equivariant spectral triple.
The eigenvalues of $D$ are $l+\half$ with multiplicities
$(2l + 1)(2l + 2)$
and $-(l+\half)$ with multiplicities $2l(2l + 1)$, where
$l\in\half\N_0$ and
$l\in\half\N$, respectively.
Using the results from \cite{Hitchin}, one easily checks that
the eigenvalues and multiplicities of $2D-\half$ coincide with
those of a classical Dirac operator on $\Sf^3\approx\mathrm{SU}(2)$
equipped with a
$\mathrm{SU}(2)\times\mathrm{SU}(2)$-invariant metric
(set $\lambda=-1$ in \cite[Proposition 3.2]{Hitchin}).
So we have an isospectral deformation of a $\mathrm{SU}(2)$-bi-invariant
classical spectral triple.

\subsection{The equivariant Dirac operator on $\podl$}
                                              \label{sec:Dirac-oper-podl}

Our aim is to show that restricting the Dirac operator on $\SU$ to 
the quantum spinor bundle $\podl\otimes V_\half\subset\SU\otimes V_\half$ 
yields a spectral triple on $\podl$.

To begin, recall from Section \ref{sec:eqvt-repns-podl} that $\pi_0$ is
the $*$-representation  of $\podl\rtimes\su$
obtained by restricting the
Heisenberg representation of $\SU\rtimes\su$
to its subalgebra $\podl\rtimes\su$ and to the
subspace $M_0=\podl$ of $V=\SU$.
Along the lines of
the previous subsection, we take the tensor product representation
$\tilde\pi
:=\pi_0\otimes\sg_\half$ on $\tilde W:=M_0\otimes
V_\half$  as spin representation.
Furthermore, the Hilbert space completion of
$\tilde W$, say $\tH$, will be considered as Hilbert space
of spinors.

Let $\tilde V_l:=\lin\set{v^l_{m,0}\hsp :\hsp m=-l,\dots,l }$. 
In Section \ref{sec:eqvt-repns-podl}, we saw that
$M_0=\oplus_{l\in\N_0}\tilde V_l$ is an
orthogonal sum of irreducible spin $l$ representation spaces.
The Clebsch-Gordan decomposition yields 
\begin{equation}                         \label{eq:tWl}
\tilde V_l\otimes V_\half=\tilde W_{l-\half}^\up \oplus
\tilde W_{l+\half}^\dn,\ \  l=1,2,\dots,
\qquad\tilde V_0\otimes V_\half=\tilde W_\half^\dn,
\end{equation}
where the restriction of $\pi_0\otimes\sg_\half$ to
$\tilde W_{l}^\up$ or $ \tilde W_{l}^\dn$ is an
irreducible spin $l$ representation of $\su$. Clearly,
\begin{equation}                         \label{eq:tW}
  \tilde W= \bigoplus_{l\in\N_0} \tilde W_{l+\half}^\up\oplus
\tilde W_{l+\half}^\dn.
\end{equation}
As $M_0\!=\!\oplus_{l\in\N_0}\tilde V_l\subset
V\!=\!\oplus_{l\in\half\N_0}H_l$ and $\tilde V_l$ is
a spin $l$ representation space, we have
$\tilde V_l\subset H_l$.
Comparing  \eqref{eq:Wl} and \eqref{eq:tWl}
shows that $\tilde W_l^\up\subset W_l^\up$ and
$\tilde W_l^\dn\subset W_l^\dn$, where $l=\half,\sesq ,\dots$. 
The operator $D$ from
Subsection \ref{sec:Dirac-oper-SU}  acts on each $W_l^\up$ and
$W_l^\dn$ as a multiple of the identity. In particular, $D$
leaves the subspaces $\tilde W_l^\up$ and $\tilde W_l^\dn$ invariant.
Let $\tilde D$ denote (the closure of) the restriction of $D$ to $\tilde W$. 
By \eqref{eq:D}, 
\begin{equation}                                        \label{eq:tD}
\tilde D \tilde w_l^\up=(l+\half)\tilde w_l^\up,\ \,
\tilde w_l^\up\in \tilde W_l^\up, \qquad
\tilde D \tilde w_{l}^\dn=-(l+\half)\tilde w_{l}^\dn,\ \,
\tilde w_{l}^\dn\in  \tilde W_{l}^\dn,\qquad l=\half,\sesq ,\dots\,.
\end{equation}
Since the spin representation $\pi_0\otimes\sg_\half$
is obtained by restricting
$\pi_{\psi}\otimes\sg_\half$ to the subalgebra $\podl\rtimes\su$ of
$\SU\rtimes\su$ and to the subspace $\tilde W\subset W$, we can now
apply verbatim the results from \cite{DabrowskiLSSV}. Therefore,
$[\tilde D,\pi_0\otimes\sg_\half(x)]$ is bounded for all
$x\in\podl$ and $\tilde D$ is left $\su$-equivariant because  
the same is true for 
$[D,\pi_{\psi}\otimes\sg_\half(x)]$ and $D$.

Comparing the eigenvalues and the corresponding multiplicities with 
those of the Dirac operator on the Riemannian 2-sphere with the standard metric
(see, e.g., \cite{Polaris}), one sees that $(\podl,\tH,\tilde D)$ is
an isospectral deformation of the classical spectral triple. 
From the asymptotic behavior of the eigenvalues, one readily concludes that it is 
$2^+$-summable. 

Summarizing, we arrive at the following proposition.
\begin{prop}                                             \label{T1}
Restricting the spectral triple $(\SU,\H,D)$ 
to  $\podl\otimes V_\half\subset\SU\otimes V_\half$ 
(considered as subspaces of $\H$) 
gives rise to a left $\su$-equivariant,
$2^+$-summable spectral triple $(\podl,\tH,\tilde D)$, 
where $\tH$ denotes the closure of $\podl\otimes V_\half$ in $\H$. 
It is an isospectral deformation of the classical
spectral triple on the Riemannian 2-sphere with the standard metric.
\end{prop}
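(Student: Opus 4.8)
The plan is to treat $(\podl,\tH,\tilde D)$ entirely as a restriction of the already-established bi-equivariant triple $(\SU,\H,D)$, so that the axioms are inherited rather than reproved from scratch. The crucial structural input is that $\tilde W=M_0\otimes V_\half$ is a closed subspace of $W=\SU\otimes V_\half$ that is simultaneously invariant under $D$ and under the representation $\pi_0\otimes\sg_\half$ of $\podl$. Invariance under $D$ follows from comparing the Clebsch--Gordan decompositions \eqref{eq:Wl} and \eqref{eq:tWl}: since $\tilde W_l^\up\subset W_l^\up$ and $\tilde W_l^\dn\subset W_l^\dn$ while $D$ acts on each $W_l^\up$, $W_l^\dn$ as a scalar, $D$ maps $\tilde W$ into itself. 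Invariance under $\pi_0\otimes\sg_\half(\podl)$ holds because $M_0=\podl$ is closed under left multiplication by $\podl$ and $\pi_0$ is exactly the restriction of $\pi_\psi$ to this subalgebra and subspace. Granting this, I would first record that $\tilde D$, being the restriction of the self-adjoint $D$ to a subspace spanned by eigenvectors, is (the closure of) a diagonal operator with real eigenvalues $\pm(l+\half)$, hence self-adjoint by \eqref{eq:tD}.

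Next I would verify the two spectral-triple conditions. For the bounded-commutator property, note that for $x\in\podl$ and $\xi\in\tilde W$ the identity $[\tilde D,\pi_0\otimes\sg_\half(x)]\xi=[D,\pi_\psi\otimes\sg_\half(x)]\xi$ holds precisely because both $D$ and $\pi_\psi\otimes\sg_\half(x)$ preserve $\tilde W$; boundedness then descends from the corresponding statement on $\H$ proved in \cite{DabrowskiLSSV}, since the norm of the restricted commutator is bounded by that of the full one. Left $\su$-equivariance of $\tilde D$ follows by the same restriction argument, as the spin representation on $\tilde W$ is the restriction of the one on $W$. For the compact-resolvent condition, I would read off from \eqref{eq:tD} that the spectrum of $\tilde D$ consists of the eigenvalues $\pm(l+\half)$, $l=\half,\sesq,\dots$, each of finite multiplicity $\dim\tilde W_l^\up=\dim\tilde W_l^\dn=2l+1$, accumulating only at infinity; this makes $(\tilde D-\zeta)^{-1}$ compact for $\zeta\in\C\setminus\R$.

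To establish $2^+$-summability and isospectrality, I would count multiplicities explicitly. Writing $n=l+\half$, the eigenvalues of $\tilde D$ are $\pm n$, $n=1,2,\dots$, each of multiplicity $2l+1=2n$, so the number of eigenvalues of $|\tilde D|$ not exceeding $N$ grows like $2N^2$; hence the $k$-th singular value of $|\tilde D|^{-2}$ is $\mathrm{O}(k^{-1})$, which is exactly $2^+$-summability. The same list (eigenvalues $\pm n$ with multiplicity $2n$, $n\geq 1$) coincides with the spectrum of the classical Dirac operator on the round $\Sf^2$, identifying $(\podl,\tH,\tilde D)$ as an isospectral deformation of the standard spectral triple. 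The one point requiring genuine care is the compatibility of the two Clebsch--Gordan decompositions, i.e.\ the inclusions $\tilde W_l^\up\subset W_l^\up$ and $\tilde W_l^\dn\subset W_l^\dn$, since this is what turns the whole argument into a clean restriction; once it is in hand, every remaining step is a routine transfer of properties from $\H$ to its invariant subspace $\tH$.
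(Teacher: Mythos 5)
Your proposal is correct and follows essentially the same route as the paper: establish the inclusions $\tilde W_l^\up\subset W_l^\up$, $\tilde W_l^\dn\subset W_l^\dn$ via the two Clebsch--Gordan decompositions, deduce that $D$ and $\pi_\psi\otimes\sg_\half(\podl)$ both preserve $\tilde W$, and then let self-adjointness, bounded commutators, equivariance and compact resolvent descend by restriction, finishing with the eigenvalue/multiplicity count $\pm(l+\half)$ with multiplicity $2l+1$ for summability and isospectrality. The only difference is one of explicitness --- the paper simply invokes the results of \cite{DabrowskiLSSV} ``verbatim'' where you spell out the norm estimate for the restricted commutator and the counting function for $|\tilde D|^{-2}$ --- which is not a different argument, just a more detailed rendering of the same one.
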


\begin{rem}
The fact that $(\podl,\tH,\tilde D)$ defines a left $\su$-equivariant spectral 
triple on the Podle\'s spheres has been proved in \cite{DabrowskiLPS} for $c=\infty$ 
and in \cite{DDLW} for all $c\in [0,\infty]$ by direct computations. 
\end{rem}


%
%
\subsection{Equivariant grading operator on $(\pinf,\tH,\tD)$}
                                              \label{sec:grading}
%
As $(\podl,\tH,\tilde D)$ is  an isospectral deformation
and $2^+$-summable, its classical dimension  is~2. For this reason and in
analogy with the classical picture,  we are interested in obtaining an
even spectral triple. 
The next proposition shows that an equivariant grading operator exists only
for the spectral triple $(\pinf,\tH,\tD)$. The ``only if'' part of the 
proposition is an interesting result since 
it seems to contradict \cite{DDLW}, where equivariant even spectral triples 
for all Podle\'s spheres were constructed. In fact, one of the main 
purposes of this paper is to point out that the construction of spectral triples 
by restriction may be possible but extra care has to be taken when trying to 
satisfy additional structures, for instance, when passing from odd to even ones. 
The reason behind the seeming contradiction between \cite{DDLW} and 
Proposition \ref{gradprop} will be explained after the proof 
of the proposition. Roughly speaking, it arises because 
the spectral triples from \cite{DDLW} and Proposition \ref{T1} are 
unitarily equivalent but, for $c<\infty$, the unitary operators 
implementing the equivalence are not 
compatible with the (unique) equivariant grading operator. 
\begin{prop}                                         \label{gradprop}
The spectral triple
$(\podl,\tH,\tilde D)$ from Proposition \ref{T1} admits  an
equivariant grading operator of an even spectral triple if and only if
$c=\infty$.
\end{prop}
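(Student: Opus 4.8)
The plan is to classify every candidate grading operator and then to test commutation with $\podl$, the outcome depending on a single representation-theoretic quantity, the coefficient $\beta_0(l)$. First I fix the shape of a candidate $\gamma$. Equivariance means $\gamma$ commutes with $\la(h):=(\pi_0\ox\sg_\half)(h)$ for $h\in\su$, and $\gamma\tD=-\tD\gamma$ forces $\gamma$ to interchange the $\pm(l+\half)$-eigenspaces of $\tD$. By \eqref{eq:tD} the eigenvalue $+(s+\half)$ occurs only on $\tW_s^\up$ and $-(s+\half)$ only on $\tW_s^\dn$, so $\gamma$ must map $\tW_s^\up$ onto $\tW_s^\dn$ and back for each half-integer $s\geq\half$. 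Since $\tW_s^\up$ and $\tW_s^\dn$ are irreducible spin $s$ modules, Schur's lemma gives $\gamma|_{\tW_s^\up}=c_s U_s$ for the unitary intertwiner $U_s\colon\tW_s^\up\to\tW_s^\dn$ (unique up to phase) and a scalar $c_s$; self-adjointness determines the opposite block and $\gamma^2=1$ forces $|c_s|=1$. Thus a candidate grading is exactly a family of phases $\{c_s\}_{s\geq\half}$, and the whole question is whether they can be chosen so that $\gamma$ commutes with $\tpi(x_{-1}),\tpi(x_0),\tpi(x_1)$.

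Writing $\gamma=\left(\begin{smallmatrix}0&G^{*}\\ G&0\end{smallmatrix}\right)$ and $\tpi(x)=\left(\begin{smallmatrix}A&B\\ C&E\end{smallmatrix}\right)$ in the decomposition $\tH=\big(\bigoplus_s\tW_s^\up\big)\oplus\big(\bigoplus_s\tW_s^\dn\big)$, with $G=\bigoplus_s c_s U_s$, the condition $[\gamma,\tpi(x)]=0$ is equivalent to the two block identities $E=GAG^{-1}$ and $C=GBG$. The next step is to compute the block structure of $\tpi(x_i)=\pi_0(x_i)\ox\id$. Because $x_i$ is a spin $1$ tensor operator, $\tpi(x_i)$ changes the total spin by at most one; and since $\pi_0(x_i)$ sends $\tilde V_l$ into $\tilde V_{l-1}\oplus\tilde V_l\oplus\tilde V_{l+1}$ with the diagonal term carrying the factor $\beta_0(l)$ (see \eqref{x1} and \eqref{x0}), the admissible transitions among the pieces $\tW_s^\up,\tW_s^\dn$ are controlled by whether $\beta_0$ vanishes.

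For $c=\infty$ one has $\beta_0(l)=0$, so $\pi_0(x_i)$ only shifts $l$ by $\pm1$. Tracking the Clebsch--Gordan recoupling inside $\tilde V_l\ox V_\half=\tW_{l-\half}^\up\oplus\tW_{l+\half}^\dn$ and imposing the spin selection rule leaves only the transitions $\tW_s^\up\to\tW_{s\pm1}^\up$ and $\tW_s^\dn\to\tW_{s\pm1}^\dn$ (the blocks $A,E$) together with the spin-preserving $\tW_s^\up\to\tW_s^\dn$ and $\tW_s^\dn\to\tW_s^\up$ (the blocks $C,B$). Substituting into $E=GAG^{-1}$ yields a recursion of the form $c_{s+1}\bar c_s=\epsilon_s/\alpha_s$ for neighbouring phases, while $C=GBG$ gives $c_s^{2}=\beta_s/\beta_s'$, where $\alpha_s,\epsilon_s,\beta_s,\beta_s'$ are the reduced matrix elements assembled from $\alpha_0(l)$. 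I would solve the second equation for $c_s$ and check that the resulting phases obey the recursion, thereby exhibiting $\gamma$. I expect this compatibility check to be the main computational obstacle in the ``if'' direction: it forces one to produce the four reduced matrix elements explicitly (a routine but unavoidable Clebsch--Gordan computation with the $c=\infty$ values of $\alpha_0(l)$), after which the construction is purely formal.

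For $c<\infty$ the coefficient $\beta_0(l)$ is nonzero for $l\geq1$, so $\pi_0(x_i)$ now has a genuine diagonal part $\tilde V_l\to\tilde V_l$. Inside $\tilde V_l\ox V_\half=\tW_{l-\half}^\up\oplus\tW_{l+\half}^\dn$ this part links the two summands and thereby creates the extra off-diagonal transitions $\tW_s^\up\to\tW_{s+1}^\dn$ and $\tW_s^\dn\to\tW_{s-1}^\up$ with amplitudes proportional to $\beta_0(s+\half)\neq0$. The first of these is a spin-raising contribution to the block $C$. However $GBG$ produces only the spin jumps already present in $B$, and $B$ contains no $\tW_s^\dn\to\tW_{s+1}^\up$ component; hence the $\tW_s^\up\to\tW_{s+1}^\dn$ part of $C$ has nothing to match on the right-hand side of $C=GBG$. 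As this amplitude is a nonzero multiple of $\beta_0(s+\half)$, the identity $C=GBG$ fails for every choice of phases, so no equivariant grading exists when $c<\infty$. This structural mismatch, caused precisely by the $\beta_0$-term that is present exactly for $c<\infty$, is the conceptual crux of the ``only if'' direction, and I expect it rather than any analytic estimate to decide the whole dichotomy.
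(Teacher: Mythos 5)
Your overall strategy is viable and genuinely different in organization from the paper's: you parametrize every candidate grading by a family of phases via Schur's lemma on the $\su$-isotypic blocks $\tW_s^{\up/\dn}$ and then test commutation with $\tpi(x_i)$ through selection rules keyed to the coefficient $\beta_0(l)$ of the representation $\pi_0$ itself, whereas the paper first decomposes $\pi_0\ox\sg_\half$ into the two \emph{inequivalent irreducible crossed-product} representations $\pi_{\pm\half}$ (forcing $\gamma=\pm\id$ on those summands) and then derives the whole dichotomy from one explicit matrix element, $\langle v^0_{0,0},\pi_0(x_0)v^0_{0,0}\rangle$, which must vanish yet equals $\half\big(\beta_\half(\half)+\beta_{-\half}(\half)\big)$, nonzero exactly for $c<\infty$. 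However, as written, both directions of your argument rest on computational claims that you assert but do not verify, and each is essential to the conclusion.

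In the ``only if'' direction the contradiction hinges on the claim that the $\tW_s^\up\to\tW_{s+1}^\dn$ amplitude of $C$ is a \emph{nonzero} multiple of $\beta_0(s+\half)$. That it is \emph{some} multiple of $\beta_0(s+\half)$ follows from your selection rules; that the proportionality constant --- a Wigner--Eckart type reduced matrix element of the spin-$1$ tensor operator (diagonal part of $\pi_0(x_i)$ on $\tilde V_l$) $\ox\,\id$ between the spin-$(l-\half)$ and spin-$(l+\half)$ summands of $\tilde V_l\ox V_\half$ --- does not vanish is precisely what must be computed, and without it there is no obstruction at all. (It is true; the $q=1$ analogue is the nonvanishing of the off-diagonal reduced matrix element of $\vec L\ox 1$ on $V_l\ox V_{1/2}$. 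But a proof needs at least one explicit matrix element, which is exactly what the paper supplies in the computation surrounding \eqref{contra}.) In the ``if'' direction you candidly defer the compatibility between the recursion $c_{s+1}\bar c_s=\epsilon_s/\alpha_s$ and the constraint $c_s^2=\beta_s/\beta_s'$, but solvability is not automatic: it already presupposes that the reduced matrix elements of the $\up\to\up$ and $\dn\to\dn$ blocks have equal moduli and that $|\beta_s/\beta_s'|=1$, nontrivial identities among the $\alpha_0(l)$ and Clebsch--Gordan data at $c=\infty$. So the existence of $\gamma$ is not actually established; the paper closes this by exhibiting the $\gamma$-eigenvectors explicitly in \eqref{lldn}--\eqref{lmv} and verifying the key identity \eqref{claim}. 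Both gaps are fillable by finite computations, but until they are filled neither implication is proved.
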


\begin{proof}
The assumptions on $\gamma$ imply that it commutes with all
elements from the crossed product algebra $\podl\rtimes\su$.
In \cite[Proposition 4.4]{SchmuedgenWCPAPod}, it has been shown that the
tensor product representation $\pi_0\otimes\sg_\half$ on $M_0\otimes
V_\half$ decomposes into
the direct sum of the irreducible representations
$\pi_{-\half}$ and $\pi_\half$ on $M_{-\half}$ and $M_\half$,
respectively. From $\gamma^*=\gamma$ and $\gamma^2=1$, it follows that
$\gamma$ has eigenvalues $\pm 1$. Since the irreducible representations
$\pi_{-\half}$ and $\pi_\half$ are nonequivalent and integrable,
we conclude that $\gamma$ acts on $M_{-\half}$ and
$M_\half$  by $\pm\id$, with opposite sign on each space.
In the notation of Section \ref{sec:eqvt-repns-podl}, we can
assume without loss of generality that
$\tW=M_{-\half}\oplus M_{\half}$ and
\begin{equation}                                    \label{gamma}
\gamma\hs v^l_{m,-\half}=- v^l_{m,-\half},\ \quad
\gamma\hs v^l_{m,\half}= v^l_{m,\half}.
\end{equation}

Next, the relation $\gamma \tD=-\tD\gamma$ forces $\gamma$ to
map $\tW_l^\dn $ into $\tW_l^\up$ and $\tW_l^\up$ into $\tW_l^\dn$.
In addition, the equivariance of $\gamma$ implies that we can choose a
basis
$\{\ket{lm\dn}: m=-l,\ldots,l\}$ for $\tW_l^\up$ and a basis
$\{\ket{lm\up}: m=-l,\ldots,l\}$ for $\tW_l^\up$ such
that the action of
$\su$ on these vectors is given by \eqref{eq:uqsu2-repns} and
\begin{equation}                                    \label{gammaket}
\gamma\hs\hs \ket{lm\dn}= \ket{lm\up},\ \quad
\gamma\hs\hs \ket{lm\up} = \ket{lm\dn}.
\end{equation}

Assume now that $\gamma$ is an operator on $\tW$ satisfying
Equations \eqref{gamma} and \eqref{gammaket}.
Using $\lin\{v^\half_{\half,-\half},v^\half_{\half,\half}\}=
\lin\{\ket{\half,\half,\dn},\ket{\half,\half,\up}\}$
and applying \eqref{gamma} and \eqref{gammaket}, we can write
\begin{equation*}
\ket{\half,\half,\dn}=
s\, v^\half_{\half,\half} + t\,v^\half_{\half,-\half},\qquad
\ket{\half,\half,\up}=
s\, v^\half_{\half,\half} - t\,v^\half_{\half,-\half},
\end{equation*}
where $s,t\in \C$ such that $|s|^2+|t|^2=1$.
Moreover, $\langle \half,\half,\dn\hs|\hs\half,\half,\up \rangle=0 $
implies $|s|^2=|t|^2=\half$.
In the notation of \eqref{eq:uqsu2-repns}, let
$V_\half= \lin\{ \ket{\half,-\half},\ket{\half,\half}\}$.
From \eqref{eq:tWl}, it follows that
$\ket{\half,\half,\dn}
=\exp(\im\omega)\,v^0_{0,0}\otimes \ket{\half,\half}$,
where $\omega\in[0,2\pi)$. Applying the formulas from
Section \ref{sec:eqvt-repns-podl}, we compute
\begin{equation}                         \label{contra}
0=\langle v^0_{0,0}, \pi_0(x_0)v^0_{0,0}\rangle=
\langle \half,\half,\dn\hs|\tpi(x_0)|\hs\half,\half,\dn \rangle
=\half \big( \beta_\half (\half) + \beta_{-\half} (\half)\big).
\end{equation} 
For $c<\infty$, we obtain a contradiction since
$ \beta_\half (\half) + \beta_{-\half} (\half)=[3]^{-1}(q^{-2}-1)\neq 0$.
Therefore a grading operator satisfying Equations \eqref{gamma} and
\eqref{gammaket} can only exist in the case of
the equatorial Podle\'s sphere $\pinf$.

Let $c=\infty$. Our aim is to find orthonormal
vectors $\ket{lm\dn},
\ket{lm\up}\in\lin\{v^l_{m,-\half},v^l_{m,\half}\}$
such that $\gamma$ is given  by \eqref{gammaket}. To begin, consider
\begin{align}                                            \label{lldn}
\begin{split}
\ket{ll\dn}&:=v^{l-\half}_{l-\half,0}\otimes \ket{\half,\half}, \\
\ket{ll\up}&:= [2l\hsp+\hsp 2]^{-\half}                   
\big(\hsp-\hsp q^\half[2l\hsp+\hsp 1]^\half \,
v^{l+\half}_{l+\half,0}\otimes \ket{\half,-\half}
+q^{-l-\half}\,v^{l+\half}_{l-\half,0}\otimes \ket{\half,\half} \big).
\end{split}
\end{align}
Since $\ket{ll\dn}$ and $\ket{ll\up}$ are highest weight vectors of weight
$q^l$, it follows that both belong to $\lin\{v^l_{l,-\half},v^l_{l,\half}\}$.
Moreover, by \eqref{eq:tWl}, $\ket{ll\dn}\in\tW_l^\dn$ and
$\ket{ll\up}\in\tW_l^\up$.
We claim that, for some $\omega_l,\ \phi_l\in[0,2\pi)$, 
\begin{equation}                                    \label{claim}
\mbox{$\frac{1}{\sqrt{2}}$}(\ket{ll\dn}+\ket{ll\up})=
\exp(\im \omega_l)\hs v^l_{l,\half},\qquad
\mbox{$\frac{1}{\sqrt{2}}$}(\ket{ll\dn}-\ket{ll\up})=
\exp(\im \phi_l)\hs v^l_{l,-\half}. 
\end{equation}
Before justifying the claim, we observe that, for
$w=x\hs v^l_{l,\half}+y\hs v^l_{l,-\half}$ with $x,y\in \C$ and
$|x|^2+|y|^2=1$,  we have
$w=\exp(\im \omega_l)\hs v^l_{l,\pm\half}$ if and only if 
$\langle w,\pi(x_0)w\rangle = \beta_{\pm\half} (l)$. 
This is apparent from the equality 
$\langle w,\pi(x_0)w\rangle
=|x|^2\beta_\half (l)+|y|^2\beta_{-\half}(l)$ since
$\beta_\half (l)>0$ and $\beta_{-\half}(l)<0$. 
Applying the formulas from
Subsection \ref{sec:eqvt-repns-podl} (note that $\beta_0(l)=0$ for all
$l\in\N_0$) gives 
\begin{align*}
&\half(\bra{ll\dn}\pm\bra{ll\up}\hs)\hs\tpi(x_0)\hs
(\hs\ket{ll\dn}\pm\ket{ll\up})\\
 &{}\hspace{80pt} =\half [2l\hsp+\hsp 2]^{-\half}q^{-l-\half}\big(\hsp \pm\hsp 
\langle v^{l+\half}_{l-\half,0}, \pi_0(x_0)
v^{l-\half}_{l-\half,0}\rangle \pm
\langle v^{l-\half}_{l-\half,0}, \pi_0(x_0)
v^{l+\half}_{l-\half,0}\rangle \big)\\
&{}\hspace{80pt} =\pm q^{-1} [2l\hsp+\hsp 2]^{-1}[2]=\beta_{\pm\half} (l)
\end{align*}
from which the claim follows.

With $e$ denoting one of the generators of $\su$, set
\begin{equation}                                      \label{lmupdn}
\ket{lm\dn}:=||\tpi(e)^{l-m}\hs \ket{ll\dn}||^{-1}\hs
\tpi(e)^{l-m}\hs\ket{ll\dn},
\qquad
\ket{lm\up}:=||\tpi(e)^{l-m}\hs
\ket{ll\up}||^{-1}\hs \tpi(e)^{l-m}\hs \ket{ll\up}.
\end{equation}
Then \eqref{claim} implies that
\begin{equation}                                       \label{lmv}
\mbox{$\frac{1}{\sqrt{2}}$}(\ket{lm\dn}+\ket{lm\up})=
\exp(\im \omega_l)\hs v^l_{m,\half},\qquad
\mbox{$\frac{1}{\sqrt{2}}$}(\ket{lm\dn}-\ket{lm\up})=
\exp(\im \phi_l)\hs v^l_{m,-\half},
\end{equation}
and the operator $\gamma$ given by Equation \eqref{gammaket} satisfies
\eqref{gamma}. Clearly, this operator meets all the requirements 
on an equivariant grading operator.
\end{proof}
Let us now explain why $(\podl,\tH,\tilde D)$ does not admit an equivariant 
grading operator for $c<\infty$ although equivariant even spectral triples 
with the same spectral properties were constructed in \cite{DDLW} for all $c$. 
The Dirac operators from \cite{DDLW} are unitarily equivalent to $\tilde D$, 
and it follows from \cite[Equation (5.1)]{DDLW} that the unitary equivalence is 
determined by unitary operators 
$$
  U_l \,:\, \lin\{ \ket{ll\dn}, \ket{ll\up}\}\, \longrightarrow \, 
\lin\{ v^l_{l,-\half},v^l_{l,\half}\},\qquad l=\half,\sesq ,\dots\,.
$$
Now Equation \eqref{claim} tells us that the existence of an 
equivariant grading operator requires that the unitary transformations between 
$\lin\{ \ket{ll\dn}, \ket{ll\up}\}$  and 
$\lin\{ v^l_{l,-\half},v^l_{l,\half}\}$ are given by matrices of the type  
$$
\mbox{$\frac{1}{\sqrt{2}}$}
\begin{pmatrix}
 \exp(\im \omega_l) & \exp(\im \omega_l)\\
\exp(\im \phi_l) & -\exp(\im \phi_l)
\end{pmatrix}, \qquad \omega_l, \phi_l\in[0,2\pi),
$$
but the contradiction obtained below Equation \eqref{contra} proves that, 
for $c<\infty$, the matrix corresponding to the unitary operator $U_{\half}$ 
does not have the above form.

%
\subsection{The real structure on $(\SU,\H,D)$}
                                              \label{sec:real-struct-SU}
%
%
In this section, we give a brief summary of the results of \cite{DabrowskiLSSV} 
on the real structure. 
Set
\begin{equation}                      \label{eq:spinor-basis-coeffs}
C_{jm} := q^{-(j+m)/2}\,[j-m]^{1/2}\, [2j]^{-1/2},  \qquad
S_{jm} := q^{(j-m)/2} \,[j+m]^{-1/2}\,[2j]^{-1/2}.
\end{equation}
With $\ket{lmn}$ defined in \eqref{eq:matelt-onb}
and $\{\ket{\half,-\half},\ket{\half,\half}\}$ being a orthonormal basis
of $V_\half$, let
\begin{align}                      \label{eq:spinor-basis-dn}
\ket{j\hs m\hs  \nu\dn}
&:= C_{jm } \,\ket{j-\half, m +\half, \nu} \ox \ket{\half,-\half}
+ S_{jm } \,\ket{j-\half, m -\half, \nu} \ox \ket{\half,+\half}, \\
\ket{j\hs m\hs  \mu\up}                    \label{eq:spinor-basis-up}
&:= - S_{j+1,m } \,\ket{j+\half, m +\half, \mu} \ox \ket{\half,-\half}
+ C_{j+1,m } \,\ket{j+\half, m -\half, \mu} \ox \ket{\half,+\half}.
\end{align}
According to the Clebsch-Gordan decomposition of the tensor
product  representation
$\sigma_l\otimes\sigma_\half$ on $V_l\otimes V_\half$,
we have
\begin{align}                      \label{basis-dn}
W^\dn_j&=\lin \{\, \ket{j\hs m\hs\nu\dn}\,:\, m  = -j,\dots,j,\ \,
\nu = -j+\half,\dots,j-\half \,\}, \quad j=\half,1,\ldots,\\
W^\up_j&=\lin \{\,\ket{j\hs m\hs  \mu\up} \,:\, m  = -j,\dots,j,\ \,
\mu = -j-\half,\dots,j+\half\,\},
 \quad j=0,\half,\ldots,                          \label{basis-up}
\end{align}
and the sets on the right hand side are orthonormal bases.
The set of all vectors $\ket{j\hs m\hs\nu\dn}$ and
$\ket{j\hs m\hs \mu\up}$ forms an orthonormal basis for
the Hilbert space of spinors $\H$. Define an antiunitary operator $J$
on $\H$ by
\begin{equation}                                 \label{eq:J-formula}
J\, \ket{jm  n\up} = \mathrm{i}^{2(2j+m +n)}
\,\ket{j,-m ,-n,\up},\quad  J\, \ket{jm  n\dn}
= \mathrm{i}^{2(2j-m -n)} \,\ket{j,-m ,-n,\dn}.
\end{equation}
The proof of the following facts can be found in
\cite{DabrowskiLSSV}:

Let $(\SU,\H,D)$ be the spectral triple from
Subsection \ref{sec:Dirac-oper-SU}. The antiunitary operator $J$ defined
above satisfies $J^2=-1$, $JD=DJ$ and
\begin{equation}                                \label{Jequi}
J\pi(h)J^{-1}=\pi(kS(h)^*k^{-1}),\quad\  h\in\su.
\end{equation}
The commutators
$[\pi(x), J\pi(y)J^{-1}]$ and $ [[D,\pi(x)],J\pi(y)J^{-1}]$
are infinitesimals of arbitrary high order for all $x,y\in\SU$, 
so $J$ satisfies in this sense the condition 
of a real structure on the spectral triple $(\SU,\H,D)$. Moreover, $J$ is
equivariant since we can consider it, for instance, as the antiunitary
part of (the closure of) the equivariant antilinear operator $J\hs \pi(k)$.

The assembly 
$(\SU,\H,D,J)$ is viewed as an equivariant real spectral triple
on $\SU$.

%
\subsection{Implementation of the real structure by the Tomita operator on $\SU$}
                                           \label{sec:real-struct-Tomita}
%

For a GNS-re\-pre\-sen\-ta\-tion of a von Neumann algebra,
the modular conjugation from the
Tomita-Takesaki theory \cite{Takesaki}
can be used to introduce a reality operator \cite{ConnesReal}.
The objective of this section is to relate the real structure $J$
on the Hilbert space of spinors to the
modular conjugation associated with the  GNS-re\-pre\-sen\-ta\-tion
$\pi_\psi$ of $\SU$.

To begin,
define an antilinear operator $T_\psi$ on $V=\SU$ by
\begin{equation*}
 T_\psi(x)=x^*,\qquad x\in\SU.
\end{equation*}
Obviously, $T_\psi^2=1$.
Recall that the inner product on $\SU$ is given by
$\<x,y>=\psi(y^*x)$ and that the Haar state $\psi$ has the
property (see, e.g., \cite{KlimykS})
\begin{equation*}
\psi(xy)=\psi((k^{-2}\lt y \rt k^{-2})x),\quad x,y\in\SU.
\end{equation*}
Using this relation together with  Equations \eqref{eq:mod-alg},
\eqref{rmodul} and
\eqref{rho},  we compute
\begin{equation*}
\<y ,T_\psi x>=\psi(xy)=\psi((k^{2}\lt y^* \rt k^{2})^*x)
=\<x,  \pi_\psi(k^{2})\rho_\psi(k^{-2}) T_\psi(y)>,\quad x,y\in\SU.
\end{equation*}
Hence $T_\psi^*$ acts on $\SU$ by
$\pi_\psi(k^{2})\rho_\psi(k^{-2})T_\psi$ and $T_\psi$ is closeable.
In the Tomita-Takesaki theory, the closure of $T_\psi$ is
referred to as Tomita operator. By a slight abuse of notation, we 
denote in the sequel a closeable operator and its closure by the same symbol. 

Let $T_\psi=J_\psi | T_\psi|$ be the polar decomposition of the
Tomita operator. The antiunitary operator $J_\psi$ is known as
modular conjugation. Since
$T_\psi^*\hs T_\psi \lceil\hsp{}_{\SU}=\pi_\psi(k^{2})\hs
\rho_\psi(k^{-2})$, we have
\begin{equation*}
J_\psi\,x = T_\psi\, \pi_\psi(k^{-1})\,\rho_\psi(k)\, x
= \pi_\psi(k)\,\rho_\psi(k^{-1})\,T_\psi\, x,\qquad x\in \SU.
\end{equation*}
Equation \eqref{eq:mod-alg} implies that
$T_\psi\hs \pi_\psi(h)=\pi_\psi(S(h)^*)\hs T_\psi$ for
$h\in\su$. Therefore
\begin{equation}                                      \label{Jpsih}
J_\psi \pi_\psi(h)J_\psi^{-1}=\pi_\psi(kS(h)^*k^{-1}),
\end{equation}
exactly as in Equation \eqref{Jequi}. Note that $J_\psi^2=1$.

Our next aim is to define an antilinear ``Tomita" operator  $T$ on
the tensor product $W=\SU\otimes V_\half$ satisfying
$T\hs \pi(h)=\pi(S(h)^*)\hs T$ for
$h\in\su$. To begin, we look for an antilinear  operator $T_\half$ on
$V_\half$ such that
$T_\half\hs \sg_\half(h)=\sg_\half(S(h)^*)\hs T_\half$.
A convenient choice is given by
\begin{equation}                                     \label{Thalf}
T_\half\ket{\half,\half}=\im\hs q^{1/2}\ket{\half,-\half},\qquad
T_\half\ket{\half,-\half}=-\im\hs q^{-1/2}\ket{\half,\half}.
\end{equation}
Then $J_\half:=\sg_\half(k)T_\half=T_\half\sg_\half(k^{-1})$ is an
antiunitary operator and $J_\half^2=-1$. 

Since the antipode is a coalgebra \emph{anti-}homomorphism, 
i.e., $\cop S(h) =S(h_{(2)})\otimes S(h_{(1)})$, we combine 
$T_\psi\otimes T_\half$ with the flip operator on tensor products
and set 
\begin{equation*}
 T_0:=\tau\circ(T_\psi\otimes T_\half)\,:\,
\SU\otimes V_\half\rightarrow V_\half\otimes\SU, 
\end{equation*}
where $\tau$ is defined by $\tau (x\otimes y)=y\otimes x$.
By construction, the antilinear operator $T_0$ satisfies
$$
T_0\,\pi(h)= (\sg_\half\otimes \pi_\psi)(S(h)^*)\,T_0, \qquad h\in\su.
$$
To obtain a mapping from $W$ into itself, we compose $T_0$ 
with an operator intertwining the tensor product representations
$\sg_\half\otimes \sg_l$ and $\sg_l\otimes \sg_\half$. Such an operator
is provided by the universal R-matrix of $\su$
(see, e.g.,~\cite{KlimykS}).
For a tensor product representation  with  $\sigma_\half$ as left tensor
factor,  it can be expressed by
\begin{equation}                                     \label{R}
R=(\sg_\half\otimes\pi_\psi)\hs
\big(qfe\otimes k+qef\otimes k^{-1} + (q-q^{-1})q^{1/2} f\otimes e\big).
\end{equation}
Let $\hat R:=\tau\circ R$. 
It follows from the properties of the R-matrix (or can be checked
directly) that
\begin{equation*}
\pi(h)\circ\hat R
=\hat R\circ (\sg_\half\otimes \pi_\psi)(h), \qquad h\in\su.
\end{equation*}
Therefore the antilinear operator
\begin{equation*}
T:= \hat R\circ T_0
\end{equation*}
fulfills $T \pi(h)=\pi(S(h)^*)T$ for $h\in\su$ as required.

To describe the action of $T$ on $W$, we need at first explicit
formulas for the action of $T_\psi$ on $\SU$.
On writing the matrix element $t^l_{mn}$ in
\eqref{eq:matelt-onb} in terms of the generators of $\SU$
(see, e.g., \cite{DabrowskiLSSV} or \cite{KlimykS}), one easily sees that 
\begin{equation}                              \label{Tpsi}
T_\psi \,\ket{lmn} = (-1)^{2l+m+n} q^{m+n} \,\ket{l,-m,-n}.
\end{equation}
From \eqref{eq:spinor-basis-coeffs}--\eqref{eq:spinor-basis-up},
\eqref{R} and \eqref{Tpsi}, we obtain after a direct calculation
\begin{align}                                             \label{Tdn}
\begin{split}
T\, \ket{l\hs m\hs \nu\dn} &= \im^{2(2l-m-\nu)}\,
 q^{l+m+\nu+\half} \,\ket{l,-m,-\nu,\dn},\\
T\, \ket{l\hs m\hs \mu\up} &=  \im^{2(2l+m+\mu)}\,
 q^{-l+m+\mu-\half}  \,\ket{l,-m,-\mu,\up},    
 \end{split}       
\end{align}
where we used also the fact that $2(m+\nu)-1$ is an even integer.
Equation \eqref{Tdn} implies 
that $T$ maps $W_l^\up$ and $W_l^\dn$ into them\-selves.
As a consequence, $TD=DT$.

Remarkably, we even have $[\pi(x), T\pi(y)T^{-1}]=0$ for all
$x,y\in\SU$. To see this, one uses
$(\cop\otimes \id) \hat R=(\sum_i \id\otimes h_i\otimes g_i)
(\sum_j h_j\otimes \id \otimes g_j)$, where
$\hat R=\sum_i  h_i\otimes g_i$, which can be deduced from
general properties of R-matrices \cite{KlimykS}.
Then a straightforward computation shows that
$T\pi(y)T^{-1}(w\otimes v)=\sum_i w(h_i\lt y^*)\otimes \sg_\half
(g_i)v$ for all $w\otimes v\in\SU\otimes V_\half$. 
Since $T\pi(y)T^{-1}$ acts by
right multiplication on the first tensor factor, 
and $\pi(x)$ by left multiplication, it is clear that
$\pi(x)$ and $T\pi(y)T^{-1}$ commute.

Observe that
\begin{align}                                             \label{T*dn}
\begin{split}
T^*\, \ket{l\hs m\hs \nu\dn} &= \im^{2(2l+m+\nu)}\,
 q^{l-m-\nu+\half} \,\ket{l,-m,-\nu,\dn},\\
T^*\, \ket{l\hs m\hs \mu\up} &=  \im^{2(2l-m-\mu)}\,
 q^{-l-m-\mu-\half}  \,\ket{l,-m,-\mu,\up}.                
 \end{split}
\end{align}
In particular,  $T^*$ is densely defined, and $T$ is closeable.
By the convention made above, its closure will again be denoted by $T$. 
In analogy with the
Tomita operator $T_\psi$, define an antilinear operator $J$ by the
unique polar decomposition $T=J |T|$.
Comparing Equations \eqref{Tdn} and \eqref{T*dn} with
Equation \eqref{eq:J-formula} shows that this $J$ actually
coincides  with that from Section \ref{sec:real-struct-SU}. Moreover, $| T|$
is  given on $W$ by
\begin{equation}                                            \label{|T|}
| T|\hs w=\pi(k)\hs \rho(k^{-1})\hs q^{-D}\hs w,\qquad w\in W,
\end{equation}
where $\rho(h):=\rho_\psi(h)\otimes \id$ for $h\in\su$.
Thus we arrive at the following Proposition.
\begin{prop}                                              \label{JT}
The antilinear operator $J$ from Equation \eqref{eq:J-formula} can
be expressed by
\begin{equation*}
  J\hs w=T\hs \pi(k^{-1})\hs \rho(k)\hs q^{D}\hs w
  =\pi(k)\hs \rho(k^{-1})\hs q^{D}\hs T\hs  w,
\qquad w\in W.
\end{equation*}
\end{prop}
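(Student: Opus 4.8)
The plan is to read $J$ off the polar decomposition $T=J\hs|T|$ rather than recompute its action on the basis. Equation \eqref{|T|} already exhibits the modulus as $|T|=\pi(k)\hs\rho(k^{-1})\hs q^{-D}$, and each factor has strictly positive spectrum (its eigenvalues are powers of $q$), so $|T|$ is positive and invertible on $W$ and hence $J=T\circ|T|^{-1}$. The first thing I would record is that the three factors commute pairwise: $\pi(k)$ and $\rho(k^{-1})$ commute because the left and right $\su$-actions commute, i.e.\ $\pi(h)\hs\rho(g)=\rho(g)\hs\pi(h)$, while $q^{-D}$ commutes with both because $D$ is bi-equivariant. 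Therefore $|T|^{-1}=q^{D}\hs\rho(k)\hs\pi(k^{-1})$, and applying $T$ on the left gives the first asserted identity $J\hs w=T\hs\pi(k^{-1})\hs\rho(k)\hs q^{D}\hs w$ at once.

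For the second expression I would move $T$ to the right past each factor using intertwining relations already at hand. From $T\hs\pi(h)=\pi(S(h)^*)\hs T$ with $h=k^{-1}$, together with $S(k^{-1})^*=k$, one gets $T\hs\pi(k^{-1})=\pi(k)\hs T$; from $TD=DT$ (functional calculus with the real eigenvalues of $D$) one gets $T\hs q^{D}=q^{D}\hs T$; and the corresponding relation for the right action, $T\hs\rho(k)=\rho(k^{-1})\hs T$, handles the middle factor. Chaining these three substitutions transforms $T\hs\pi(k^{-1})\hs\rho(k)\hs q^{D}$ successively into $\pi(k)\hs T\hs\rho(k)\hs q^{D}$, then $\pi(k)\hs\rho(k^{-1})\hs T\hs q^{D}$, and finally $\pi(k)\hs\rho(k^{-1})\hs q^{D}\hs T$, which is the second formula.

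The main obstacle is the relation $T\hs\rho(k)=\rho(k^{-1})\hs T$, since only the intertwining with $\pi$ was stated explicitly for $T$. I would justify it either directly from the explicit action \eqref{Tdn}---$T$ sends $\ket{lm\nu\dn}$ and $\ket{lm\mu\up}$ to scalar multiples of $\ket{l,-m,-\nu,\dn}$ and $\ket{l,-m,-\mu,\up}$, while $\rho(k)$ acts on these two vectors by $q^{-\nu}$ and $q^{-\mu}$, so the flip $\nu,\mu\mapsto-\nu,-\mu$ interchanges $\rho(k)$ and $\rho(k^{-1})$---or, more structurally, by tracing the right $\su$-action through the construction $T=\hat R\circ\tau\circ(T_\psi\otimes T_\half)$ and invoking the behaviour $T_\psi\hs\rho_\psi(k)=\rho_\psi(k^{-1})\hs T_\psi$ of the Tomita operator on $\SU$. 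As a self-contained alternative that sidesteps isolating this relation, one can simply verify both equalities on the orthonormal basis of $\H$ by comparing \eqref{Tdn}, \eqref{|T|} and \eqref{eq:J-formula}: every operator occurring is either diagonal ($\pi(k)$, $\rho(k^{-1})$, $q^{D}$) or basis-permuting up to a scalar ($T$, $J$), so the check is routine though less conceptual.
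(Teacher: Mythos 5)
Your proposal is correct and follows essentially the same route as the paper: the paper also obtains the first formula by combining the polar decomposition $T=J\hs|T|$ with the expression \eqref{|T|} for $|T|$ (itself derived from the computation of $T^*T$), and the second formula by commuting $T$ past the three diagonal factors. Your explicit verification of $T\hs\rho(k)=\rho(k^{-1})\hs T$ is a detail the paper leaves implicit, and it checks out against \eqref{Tdn} since $\rho(k)$ acts on $\ket{l\hs m\hs\nu\hs\updn}$ by the real scalar $q^{-\nu}$ and $T$ flips the sign of $\nu$.
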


Proposition \ref{JT} yields another proof of the invariance
relation \eqref{Jequi}.
Since  $q^D$ and $\rho(k)$ commute with $\pi(h)$ and since
$T\hs \pi(h)=\pi(S(h)^*)\hs T$ for all $h\in\su$,
$J$ and $\pi(h)$ satisfy the same commutation relation as $J_\psi$
and $\pi_\psi(h)$ in Equation \eqref{Jpsih}.

We showed above that $[\pi(x), T\pi(y)T^{-1}]=0$ for all
$x,y\in\SU$. The 
operator
$J_0:=T\hs \pi(k^{-1})\hs \rho(k)$ still satisfies
$[\pi(x), J_0\pi(y)J_0^{-1}]=0$
for all $x,y\in\SU$ since
$\pi(k^{-1})\rho(k)\pi(y)\rho(k^{-1})\pi(k)
=\pi(k^{-1}\lt y\rt k^{-1})$ by the equivariance of
$\pi=\pi_\psi\otimes\id$. However, it was argued
in \cite{DabrowskiLSSV} that $J$ does not have this property.
This is due to the operator $q^D=|\hat R^*|^{-1}$ ensuring the
(anti)unitarity of $J$. To verify $|\hat R^*|=q^{-D}$,  observe that
$$
(T_\psi\otimes T_\half)^*w=-
(\pi_\psi(k^{2})\rho_\psi(k^{-2})T_\psi\otimes\sg_\half(k^2)T_\half )w
=-\pi(k^2)\rho(k^{-2})(T_\psi\otimes T_\half)w\quad w\in W.
$$
Moreover, $\hat R(T_\psi\otimes T_\half)=(T_\psi\otimes T_\half)\hat R^*$
and $\hat R^*(T_\psi\otimes T_\half)=(T_\psi\otimes T_\half)\hat R$ since
$S\otimes S\hs (\hat R)=\hat R$. Hence
$$
T^*\hs T\hs w=(T_\psi\otimes T_\half)^*\hs (T_\psi\otimes T_\half)\hs\hat
R\hs \hat R^*\hs w=
\pi(k^2)\rho(k^{-2})|\hat R^*|^2w, \quad w\in W.
$$
Comparing this equation with \eqref{|T|} gives
$|\hat R^*|=q^{-D}$ since $\pi(k)$ and $\rho(k)$ are invertible
on $W$.

%
\subsection{Equivariant real even spectral triple for $\pinf$}
                                              \label{sec:real-struct-podl}

For an $2^+$-summable even spectral triple with grading operator
$\gamma$, the requirements on a real structure $J$ include the commutation
relation $J\gamma=-\gamma J$.  By Proposition \ref{gradprop}, only
$(\pinf,\tH,\tilde D)$ admits a grading operator of an even spectral
triple. For this reason, we restrict the following discussion to the
equatorial Podle\'s sphere $\pinf$ although most of the results remain
valid in the general case.

We proceed as in Section \ref{sec:real-struct-Tomita} and define an
antilinear operator $\tilde T_{\psi}$ on $M_0=\pinf$ by
\begin{equation*}
\tilde T_{\psi}(x)=x^*,\qquad x\in\pinf.
\end{equation*}
By Equation \eqref{eq:mod-alg}, since $\pi_0(h)x=h\lt x$, we have
$\tilde T_\psi\hs \pi_0(h)=
\pi_0(S(h)^*)\hs \tilde T_\psi$
for all $h\in\su$.
From \cite[Lemma 6.3]{SchmuedgenWCPAPod}, it follows that
the Haar state $\tilde \psi$ on $\pinf$ satisfies
property
\begin{equation*}                                 
\tilde \psi(xy)=\tilde \psi((k^{-2}\lt y )x),\qquad x,y\in\pinf.
\end{equation*}
Analogously to Section  \ref{sec:real-struct-Tomita},
$\tilde T_{\psi}^*\lceil\hsp{}_{M_0}=\pi_0(k^2)\hs \tilde T_\psi$
and $\tilde T_{\psi}$ is
closeable (with closure denoted again by $\tilde T_{\psi}$). 
Moreover, $|\tilde T_{\psi}|\lceil\hsp{}_{M_0}=\pi_0 (k)$, and 
the anti\-unitary operator $\tilde J_{\psi}$
from the polar decomposition
$\tilde T_{\psi}
=\tilde J_{\psi}\hs |\tilde T_{\psi}|$
is given on $M_0$ by
\begin{equation*}
\tilde J_\psi\,x = \tilde T_\psi\, \pi_0 (k^{-1})\, x
= \pi_0 (k)\,\tilde T_\psi\hs x, \qquad x\in M_0. 
\end{equation*}

Since the entries of the R-matrix in \eqref{R} are elements from
$\su$, the restriction of $\hat R$ (again denoted by $\hat R$) to $\tilde
W=M_0\otimes V_\half$ leaves $\tilde W$ invariant.
Thus, with $T_\half$ from the previous subsection,
\begin{equation}                                    \label{tT}
\tilde T:= \hat R \,(\tilde T_\psi \otimes T_\half).
\end{equation}
defines an antilinear operator on $\tilde W$.
By construction,
$\tT\hs \tpi(h)=\tpi(S(h)^*)\hs \tT$ for all $h$ in $\su$.
Its adjoint $\tT^*$ acts on
$\tilde W$ by
\begin{equation*}
\tT^*w=-\tpi(k^2)\hs (\tilde T_\psi \otimes T_\half)\hs \hat R^*\hs w,
\qquad w\in \tW.
\end{equation*}
Recall that $|\hat R^*|=q^{-D}$,\,
$\hat R(\tT_\psi\otimes T_\half)=(\tT_\psi\otimes T_\half)\hat R^*$ and
$\hat R^*(\tT_\psi\otimes T_\half)=(\tT_\psi\otimes T_\half)\hat R$.
Hence
\begin{equation*}                                      
\tT^*\hs\tT\hs w = \tpi(k^2)\hs \hat R\hs\hat R^*\hs w =
\tpi(k^2)\hs q^{-2 D}\hs w, \quad w\in \tW.
\end{equation*}
Clearly, $\tT^*$ is densely defined and, therefore,
$\tT$ is closeable. Denoting its closure again by $\tT$,
we can write
$|\tT|\lceil\hsp{}_{\tW}\hs = \tpi(k)\hs q^{- \tilde D}$
since $D\lceil\hsp{}_{\tH}\hs =\tilde D$.

Now we define an antiunitary operator $\tJ$ by
the polar decomposition $\tT=\tJ\hs |\tT|$.
From the preceding, it follows that
\begin{equation}                                         \label{tJw}
\tJ\hs w=\tT\hs \tpi(k^{-1})\hs q^{\tilde D}\hs w =\hat R \,
(\tilde T_\psi \otimes T_\half)\hs \tpi(k^{-1})\hs q^{\tilde D}\hs w,
\qquad w\in\tW.
\end{equation}

Our next aim is to give explicit formulas for the action of $\tJ$. Let
$\ket{lm\dn}$ and $\ket{lm\up}$ denote the vectors  defined by Equations
\eqref{lldn} and \eqref{lmupdn}. The set of all these
vectors forms an orthonormal basis for~$\tH$. Inserting \eqref{lldn} 
into \eqref{lmupdn}, one easily verifies that
\begin{align}                                         \label{lmd}
\ket{l\hs m\hs \dn}
&:= C_{lm } \, v^{l-\half}_{m +\half,0} \ox \ket{\half,-\half}
+ S_{lm } \,v^{l-\half}_{m -\half,0} \ox \ket{\half,+\half}, \\
\ket{l\hs m\hs \up}                                    \label{lmu}
&:= - S_{l+1,m } \,v^{l+\half}_{m +\half,0} \ox \ket{\half,-\half}
+ C_{l+1,m } \,v^{j+\half}_{m -\half,0} \ox \ket{\half,+\half}
\end{align}
with $C_{lm }$ and $S_{lm }$ given by \eqref{eq:spinor-basis-coeffs}.

To determine $\tT_\psi$, we use the identification $M_0\hsp =\hsp \pinf$.
Then
$v^0_{0,0} \hsp =\hsp 1$ and, thus,
$$
v^{l+1}_{l+1,0}\hsp =\hsp
(\Pi_{k=0}^l\alpha_0(k))^{-1}\pi_0(x_1)^lv^0_{0,0}\hsp =\hsp
(\Pi_{k=0}^l\alpha_0(k))^{-1}x_1^l.
$$ 
Similarly,
$v^{l+1}_{-l-1,0}\hsp =\hsp (\Pi_{k=0}^l\alpha_0(k))^{-1}x_{-1}^l$.
This gives $\tT_\psi v^{l}_{l,0}\hsp =\hsp (-q)^l v^{l}_{-l,0}$ since
$x_1\hsp =\hsp -qx_{-1}^*$. Computing both sides of
$\tT_\psi\hs \tpi(e)^k\hs v^{l}_{l,0}\hsp =\hsp
(-q)^{-k}\hs \tpi(f)^k\hs\tT_\psi\hs v^{l}_{l,0}
\hsp =\hsp (-q)^{l-k}\hs\tpi(f)^k\hs v^{l}_{-l,0}$, we finally get
$$
\tT_\psi\hs v^{l}_{m,0}\hs =\hs (-q)^m \hs v^{l}_{-m,0}, \qquad
l\in \N_0,\ \, m =-l,\ldots,l.
$$

Using these formulas, the action of
$\tT=\hat R \,(\tilde T_\psi \otimes T_\half)$
on $\ket{lm\dn}$ and $\ket{lm\up}$ can be computed directly.
Analogously to Equation \eqref{Tdn}, we find
\begin{equation*}                                          
\tT\, \ket{l\hs m\dn} = \im^{2m}\,
 q^{l+m+\half} \,\ket{l,-m,\dn},\quad
\tT\, \ket{l\hs m\up} =  -\im^{2m}\,
 q^{-l+m-\half}  \,\ket{l,-m,\up}.
\end{equation*}
Consequently, by \eqref{tJw},
\begin{equation*}                                          
\tJ\, \ket{l\hs m\dn} = \im^{2m}\,  \ket{l,-m,\dn},\qquad
\tJ\, \ket{l\hs m\up} =  -\im^{2m}\, \ket{l,-m,\up}.
\end{equation*}
Therefore, by \eqref{lmv} (up to unitary equivalence), 
\begin{equation*}
                   \tJ\,  v^l_{m,\pm\half} =\im^{2m}\, v^l_{-m,\mp\half}, 
\end{equation*}
where $l=\half,\sesq,\ldots$ and $m=-l,\dots,l$.

The last equation shows that  $\tJ$ coincides with
the real structure defined in \cite{DabrowskiLPS}. The results
in \cite{DabrowskiLPS} (or \cite{DDLW})
tell us that $[\tpi(a),\tJ\tpi(b)\tJ^{-1}]$ and
$[[D,\tpi(a)],\tJ\tpi(b)\tJ^{-1}]$ are infinitesimals of arbitrary high
order for all $a,b\in\pinf$.

Finally let us discuss how $\tT$ and $\tJ$ are related to $T$ and $J$ 
from Section \ref{sec:real-struct-Tomita}. 
Since $\tT_\psi=T_\psi\lceil_{\pinf}$, it follows from the definitions 
that $T\lceil\hsp{}_\tW=\tT$. 
In particular, as shown above, 
$[\pi(x), \tT\pi(y)\tT^{-1}]=0$ for all
$x,y\in\pinf$. On the other hand, we do not have
$J\lceil\hsp{}_\tW=\tJ$. 
This is due to the fact that
the adjoint $T_\psi^*(x)=k^{2}\lt x^* \rt k^{2}$ does not map $\pinf$ into itself.  
In general, $y \rt k^{2}\notin \pinf$ for $y\in\pinf$.

Summarizing our conclusions, we can now state the main theorem of this
paper.

\begin{thm} Let  $(\SU,\H,D)$ denote
the spectral triple described in Section \ref{sec:Dirac-oper-SU}.
The embedding $\podl\otimes V_\half\subset\SU\otimes V_\half$
gives rise to  an equivariant real even spectral triple
$(\podl,\tH,\tD,\tJ,\gamma)$ if and only if $c=\infty$.
The equivariant representation $\tpi$ on $\pinf\otimes V_\half$ is given
by restricting the *-representation of $\SU\rtimes \su$ on
$\SU\otimes V_\half$  to a *-rep\-re\-sen\-ta\-tion of
$\pinf\rtimes \su$ on
$\pinf\otimes V_\half$. The Dirac operator $\tD$ is the closure of the
restriction of $D$ to the invariant subspace $\pinf\otimes V_\half$.
The decomposition of $\pinf\otimes V_\half$ into eigenspaces
corresponding to the eigenvalues $\pm1$ of $\gamma$  coincides
with the decomposition into subspaces corresponding to
irreducible *-representations  of $\pinf\rtimes \su$. The real structure
$\tJ$ is the antiunitary part of the equivariant (closed) Tomita operator
defined in Equation \eqref{tT}. The commutators
$[\tpi(a),\tJ\tpi(b)\tJ^{-1}]$ and
$[[\tD,\tpi(a)],\tJ\tpi(b)\tJ^{-1}]$ are infinitesimals of arbitrary high
order for all $a,b\in\pinf$.
\end{thm}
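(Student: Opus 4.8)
The plan is to assemble the theorem from the propositions and explicit computations already in place, so the proof is essentially a synthesis together with a handful of sign verifications. First I would invoke Proposition \ref{T1} to obtain, for every $c\in[0,\infty]$, a left $\su$-equivariant $2^+$-summable spectral triple $(\podl,\tH,\tD)$ by restriction, with representation $\tpi=\pi_0\otimes\sg_\half$ and $\tD$ the closure of the restriction of $D$ to $\podl\otimes V_\half$. This already establishes the representation and the Dirac operator exactly as described in the statement.

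For the ``even'' part and the ``if and only if'' assertion, I would appeal directly to Proposition \ref{gradprop}: an equivariant grading operator $\gamma$ of an even spectral triple exists precisely when $c=\infty$. The ``only if'' direction of the theorem is thus inherited verbatim from the contradiction obtained below Equation \eqref{contra}. The claim that the $\pm1$-eigenspaces of $\gamma$ coincide with the irreducible summands $M_{\pm\half}$ of $\pinf\otimes V_\half$ is exactly the content of \eqref{gamma}, so it requires no further argument.

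It then remains to verify, for $c=\infty$, that the operator $\tJ$ constructed in Section \ref{sec:real-struct-podl} satisfies all the requirements of a real structure compatible with $\tD$ and $\gamma$. Equivariance is built into the definition of $\tJ$ as the antiunitary part of the equivariant Tomita operator $\tT$ from \eqref{tT}. The algebraic sign conditions $\tJ^2=-1$, $\tJ\tD=\tD\tJ$ and $\tJ\gamma=-\gamma\tJ$ I would check directly from the closed form $\tJ\,v^l_{m,\pm\half}=\im^{2m}\,v^l_{-m,\mp\half}$ derived at the end of that section. Since $\im^{2m}$ is real of modulus one and $\tJ$ is antilinear, two applications produce the factor $\im^{-4m}=-1$ on the half-integer weights $l=\half,\sesq,\dots$; because $\tJ$ preserves the $l$-label and interchanges the two irreducible summands, it commutes with $\tD$ (which depends only on $l$ and the $\up/\dn$ type) and anticommutes with $\gamma$ (which acts by $\pm1$ on those summands). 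Finally, the commutant and first-order conditions up to infinitesimals of arbitrary high order follow from the identification of $\tJ$ with the real structure of \cite{DabrowskiLPS}, together with the estimates of \cite{DabrowskiLPS,DDLW}.

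The main obstacle is not in this final assembly but upstream, in Proposition \ref{gradprop}: the genuinely delicate point is that the obstruction $\beta_\half(\half)+\beta_{-\half}(\half)\neq 0$ singles out $c=\infty$, so the entire ``if and only if'' rests on that computation. Within the present proof the only real care needed is the bookkeeping of phases and arrow-labels when passing between the $\ket{lm\up}$, $\ket{lm\dn}$ basis, in which $\tJ$ is diagonal up to a phase, and the $v^l_{m,\pm\half}$ basis, in which $\gamma$ is diagonal. These bases are related by \eqref{lmv}, and consistency of the two descriptions is precisely what guarantees that $\tJ$, $\tD$ and $\gamma$ satisfy the mutually compatible sign relations simultaneously.
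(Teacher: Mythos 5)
Your proposal is correct and follows essentially the same route as the paper, which states the theorem as a summary assembled exactly as you do: Proposition \ref{T1} for the restricted spectral triple, Proposition \ref{gradprop} for the grading operator and the ``if and only if'' (with the $\pm1$-eigenspaces of $\gamma$ identified with $M_{\mp\half}$ via \eqref{gamma}), the Tomita operator $\tT$ of \eqref{tT} and its antiunitary part $\tJ$ for the real structure, and the estimates of \cite{DabrowskiLPS,DDLW} for the commutant and first-order conditions up to infinitesimals of arbitrary high order. One small correction to your sign check: for $l=\half,\sesq,\dots$ the exponent $2m$ is odd, so $\im^{2m}=\pm\im$ is purely imaginary rather than real (were it real, antilinearity would give $\tJ^2=+1$); the correct computation $\tJ^2\,v^l_{m,\pm\half}=\overline{\im^{2m}}\,\im^{-2m}\,v^l_{m,\pm\half}=\im^{-4m}\,v^l_{m,\pm\half}=-\,v^l_{m,\pm\half}$ nevertheless yields the factor $-1$ that you state.
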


\section*{Acknowledgments} 
The author gratefully acknowledges the useful comments of an anonymous referee. 
This work was carried out with partial financial support from the DFG (fellowship WA 1698/2-1), 
from the CIC of the Michoacan University (project ''Enlaces entre geometr\'ia no-conmutativa 
y ecuaciones de f\'isica matem\'atica``), and from CORDIS (FP7, PIRSES-GA-2008-230836).

\end{document}